\documentclass[11pt,a4paper]{article}

\usepackage{amsmath}
\usepackage{amsfonts}
\usepackage{mathrsfs}
\usepackage{subfigure}
\usepackage{epsfig}
\usepackage{pstricks}
\usepackage{psfrag}

\newtheorem{prop}{Proposition}

\newtheorem{corollary}{Corollary}
\newtheorem{example}{Example}
\newenvironment{proof}{
    {\bf Proof}}{\hbox{\ }\hfill$|||$
}

\newcommand{\hlf}{\frac{1}{2}}

\newcommand{\figref}[1]{Fig.~\ref{#1}}
\newcommand{\exref}[1]{Example~\ref{#1}}
\newcommand{\corref}[1]{Corollary~\ref{#1}}
\newcommand{\secref}[1]{Section~\ref{#1}}

\newcommand{\propref}[1]{Proposition~\ref{#1}}

\newcommand{\smt}[2]{\Bigl[
 \begin{smallmatrix} #1\\ #2
 \end{smallmatrix}\Bigr]}


\newcommand{\gc}{\gamma}

\newcommand{\eps}{\epsilon}

\newcommand{\Chi}{\chi}

\newcommand{\bA}{\mathbf{A}}   
\newcommand{\bD}{\mathbf{D}}   
\newcommand{\bI}{\mathbf{I}}   
\newcommand{\bJ}{\mathbf{J}}   
\newcommand{\be}{\mathbf{e}}   
   
\newcommand{\bj}{\mathbf{j}}   
   
\newcommand{\br}{\mathbf{r}}

\newcommand{\R}{\mathbb{R}}
\newcommand{\Z}{\mathbb{Z}}

\newcommand{\AL}{{\mathcal{A}}}
\newcommand{\BL}{{\mathcal{B}}}
\newcommand{\DL}{{\mathcal{D}}}
\newcommand{\EL}{{\mathcal{E}}}

\newcommand{\ifn}{indicator function}

\newcommand{\sT}{T}    
\newcommand{\Tbase}{\sT}   
\newcommand{\sH}{{ \cal{H}}}
\newcommand{\Dim}{k}
\newcommand{\cl}{c}  
\newcommand{\clp}{\cl'}  

\begin{document}%
\title{Refinability of splines from lattice Voronoi cells}

\author{J\"org Peters}

\maketitle

\begin{abstract}
Splines can be constructed by convolving the \ifn\ of the Voronoi cell
of a lattice.
This paper presents simple criteria that imply that only a small subset
of such spline families can be refined:
essentially the well-known box splines and tensor-product splines.
Among the many non-refinable constructions are hex-splines and
their generalization to non-Cartesian lattices.
An example shows how non-refinable splines can exhibit increased 
approximation error upon refinement of the lattice.
\end{abstract}

\section{Introduction}
Univariate B-splines are defined by repeated convolution, 
starting with the \ifn s of a partition of the real line
by knots (An \ifn\ takes on the value one on the interval but
is zero otherwise).
This construction implies local support and 
a number of desirable properties (see \cite{Boor:1978:PGS,deboor87e})
that have made B-splines the representation of choice 
in modeling and analysis. In particular, B-splines can 
be exactly represented as a linear combination of B-splines with
a finer knot sequence. This refinability is a key ingredient of
multi-resolution, adaptive and sparse representation of data.
 
By tensoring univariate B-splines, we can obtain 
on Cartesian grids in any dimension.
For uniform knots, box-splines \cite{deboor93box} generalize this construction
by allowing convolution directions other than the orthogonal ones of tensoring. 
This is not to say that box-spline convolution directions 
are arbitrary; to be practically useful,
the directions need to be compatible with the lattice on which the 
spline is shifted, so that only a small number but sufficiently many
lattice-shifts of the box-spline overlap at every point.

As a most prominent example in two variables, 
the three direction box-spline forms a
function with hexagonal footprint. The function is 
called `hat function' and consists of six linear function pieces over
the constituent triangles.
Shifts over an equilateral triangulation add up to 1. 
Convolution of this hat function with itself results in a twice
continuously differentiable function of degree 4 
and $m$-fold convolution yields a function of degree $3m-2$
with smoothness $C^{2m}$.
Since this progression skips odd orders of smoothness,
van der Ville et al. \cite{van:04} proposed to directly
convolve the \ifn\ of the hexagon and
build splines on the corresponding tessellation of the plane.
They went on to show that the resulting hex-splines share a number 
of desirable properties familiar from box-splines. But the authors
did not settle whether the splines were \emph{refinable}
\cite{perscomBanff},
i.e.\ whether hex-splines of the given hexagonal tessellation
$\Tbase$ can be represented as linear combinations of hex-splines based
on a finer-scale hexagonal tessellation, say $\hlf\Tbase$.
Refinability is important in practice since 
it guarantees monotonically decreasing approximation error
as the scale of the tessellation refined. 
Moreover refinability is needed to locally adapt the space
to features of higher frequency, a pre-requisite for 
multi-resolution analysis.

\begin{itemize}
\item
This paper presents simple criteria on a lattice that need to hold
in order for shift-invariant functions on that lattice 
to be represented as linear combinations of 
piecewise constant shift-invariant functions on 
the smaller-scaled copy of the lattice.
\end{itemize}
For example, the lattice must contain, for every of cell facet, 
the whole plane containing it.
Therefore, requiring refinability, even of just the constant spline,
strongly restricts allowable lattices. 
\begin{itemize}
\item
In contrast to tensor-product and box splines,
we show that hex-splines and similar constructions 
can only be scaled, but not refined: scaled hex-spline spaces are not nested.
\item
A concrete example illustrates that non-refinable spaces 
can exhibit increased 
approximation error on a finer-scaled copy of the underlying tessellation
(see \exref{ex:error}).
\item
The analysis is extends to overcomplete spaces.
\end{itemize}

{\bf Overview.}
\secref{sec:lit} reviews lattices,
hex-splines and their generalizations.
\secref{sec:nonref} exhibits two simple criteria 
for testing whether a lattice can support a refinable
space of splines via convolution of \ifn s.
\secref{sec:overcomplete} extends this investigation to
overcomplete coverings by combining several families of 
\ifn s shifted by less than the lattice spacing. 
\secref{sec:discuss} illustrates the importance of refinability.

\section{Splines from lattice Voronoi cells}
\label{sec:lit}
A $\Dim$-dimensional lattice is a discrete subgroup of full rank in
a $\Dim$-dimensional Euclidean vector space.
A lattice may alternatively be viewed as a tessellation of space
by identical cells. The Euclidean plane admits three highly symmetric
tessellations into
equilateral triangles, squares, or hexagons respectively.
Convolution starting with the \ifn\ on any of these 
polygons yields a hierarchy of spline functions of local support.
The regular partition into squares 
gives rise to uniform tensor-product B-splines and
the regular triangulation and its hexagonal dual to box splines. 
An interesting and natural complement, 
to convolve the \ifn\ $H$ of the hexagon with itself,
was developed and analyzed by van De Ville et al. \cite{van:04}.
This yields a family of splines, of smoothness $n-1$ supported on 
a local $n+1$-neighborhood, that the authors named hex-splines.
van De Ville et al. observed that hexagons are Voronoi cells 
of a lattice and that the cell can
be split into three quadrilaterals using one of two choices of the 
central split. Thus $H$ can be viewed as the union 
of three constant box splines \cite{deboor93box},
an approach that was worked out more generally for the FCC lattice
by \cite{Kimperscom} and in more generality in 
\cite{journals/tsp/MirzargarE10}.
\cite{van:04} compares tensor-product splines and hex-splines
and presents a Fourier transform. The transform
allows determining an $L^2$ approximation order 
with emphasis on low frequencies, as a combination of 
projection into the hex-spline space and a quasi-interpolation error.
\cite{condat:05} derived quasi-interpolation formulas 
and showed promising results when applying
hex-splines to reconstruction in image processing.

\section{Refinability constraints for lattice Voronoi cells }
\label{sec:nonref} 
Given a tessellation $\Tbase$ of $\R^k$, we denote by $\Chi(\Tbase)$ 
the space of \ifn s over the cells of $\Tbase$
and by $\Chi(T^{1})$ the space of \ifn s
on the smaller-scale copy $T^{1}$ of $\Tbase$.
The space $\Chi(\Tbase)$ is refinable if
each \ifn\ in $\Chi(\Tbase)$
can be represented as linear combinations of functions in $\Chi(T^{1})$.

Testing whether a tessellation admits a refinable space of functions
requires off hand solving for weights such that a linear combination
of elements in $\Chi(T^{1})$ with these weights reproduces
each element in $\Chi(\Tbase)$. 
\propref{prop:straddle} below provides
a much simpler necessary condition that avoids such algebraic analysis.
While we are interested in shift-invariant tessellations,
\propref{prop:straddle} applies more generally. 

\begin{prop}
Let $\Tbase$ be a tessellation of $\R^k$
and $T^{1}$ its smaller-scale copy. Then  $\Chi(\Tbase)$ is
refinable only if every tessellation facet of $\Tbase$ is representable 
as a union of tessellation facets of $T^{1}$.
\label{prop:straddle}
\end{prop}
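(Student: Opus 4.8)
The plan is to exploit the elementary fact that any function in $\Chi(T^1)$ is constant on the interior of each cell of $T^1$, and to compare the discontinuity set of an indicator function of $\Tbase$ with that of a candidate representation over $T^1$. Concretely, let $\Sigma^1$ denote the union of all facets of $T^1$ (its $(k{-}1)$-dimensional skeleton). For any cell $D$ of $T^1$ the indicator $\chi_D$ is discontinuous exactly on $\partial D\subseteq\Sigma^1$; since we work with locally finite linear combinations, every $h=\sum_i c_i\chi_{D_i}$ with $D_i$ cells of $T^1$ is constant in a neighborhood of each point off $\Sigma^1$, so $h$ is discontinuous only on $\Sigma^1$. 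On the other hand, if $F$ is a facet of $\Tbase$ and $C$ a cell of $\Tbase$ with $F\subseteq\partial C$, then $\chi_C$ jumps by $1$ across $F$ at every point of the relative interior $\mathrm{relint}\,F$ of $F$ (a small ball about such a point meets only $C$ and the unique cell across $F$). Hence, if $\Chi(\Tbase)$ is refinable and $\chi_C=h$ as above, then $\mathrm{relint}\,F\subseteq\Sigma^1$.

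The next step upgrades ``$\mathrm{relint}\,F$ lies in the skeleton of $T^1$'' to ``$\mathrm{relint}\,F$ lies in the union of those facets of $T^1$ that are coplanar with $F$''. Let $P$ be the hyperplane carrying $F$. A facet $G$ of $T^1$ not contained in $P$ meets $P$ in an affine set of dimension at most $k-2$, so the part of $\mathrm{relint}\,F$ covered by such transverse facets is a locally finite union of sets of dimension $\le k-2$, relatively closed and nowhere dense in the $(k{-}1)$-manifold $\mathrm{relint}\,F$. The part covered by the coplanar facets of $T^1$ is relatively closed in $\mathrm{relint}\,F$; if it were not all of $\mathrm{relint}\,F$, its complement would be a nonempty relatively open subset covered by the nowhere dense transverse part, which is impossible by a dimension/Baire argument. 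Thus $\mathrm{relint}\,F$, and then $F=\overline{\mathrm{relint}\,F}$ (the relevant union of facets being locally finite, hence closed), is contained in the union of facets of $T^1$ lying in $P$. A short check using that $T^1$ is again a tessellation — so that near a generic point of $\mathrm{relint}\,F$ exactly two cells of $T^1$ meet, separated by $P$, and the coplanar facet through that point cannot straddle the relative boundary of $F$ — shows these facets subdivide $F$ exactly, i.e.\ $F$ is a union of facets of $T^1$. Since $F$ was an arbitrary facet of $\Tbase$, the proposition follows.

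I expect the difficulty to be bookkeeping rather than conceptual: (i) phrasing the continuity argument so that it is valid for indicator functions under whatever summability convention underlies ``refinable'' — I would fix locally finite combinations, so that continuity of $h$ off $\Sigma^1$ is immediate termwise; (ii) the dimension/Baire step, which needs the collection of facets of $T^1$ to be locally finite and $\mathrm{relint}\,F$ treated as a genuine $(k{-}1)$-manifold; and (iii) the final ``subdivides exactly'' claim, where one must exclude a coplanar facet of $T^1$ overhanging the relative boundary of $F$, using that $T^1$ is a scaled copy of $\Tbase$ rather than an arbitrary tessellation. None of these steps requires solving for refinement weights, which is precisely the simplification \propref{prop:straddle} is meant to afford.
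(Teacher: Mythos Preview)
Your proposal is correct and rests on the same mechanism as the paper's proof: any linear combination of $T^1$-cell indicators is constant on the interior of each $T^1$-cell, so a $T^1$-cell straddling a $\Tbase$-facet forces the representation to take both values $0$ and $1$ there. The paper states this as a two-line contrapositive (``not a union of $T^1$-facets $\Rightarrow$ some cell $c'$ of $T^1$ crosses the facet $\Rightarrow$ $\chi_{c'}$ would need both values'') without your discontinuity-set phrasing or the coplanarity/Baire bookkeeping; the underlying argument is identical, and you have simply supplied the rigor the paper leaves implicit.
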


\begin{proof}
Since $T^1$ is a tessellation, its cells do not overlap.
Therefore, if a facet of a cell $\cl$ in $\Tbase$ is not 
a union of tessellation facets of $T^{1}$ then a cell $\clp$ of $T^{1}$
must cross this facet. Let $H^1 \in \Chi(T^{1})$ be the \ifn\ of $\clp$
and $H$ the \ifn\ of $\cl$.
Then, in order to reproduce $H$,
$H^1$ would have to take on both values $0$ and $1$.
\end{proof}

Scaling of the tessellation
transforms this criterion to a less local one.

\begin{prop}
For $\Chi(T)$ to be refinable, a tessellation $T$ of $\R^k$
must contain, for each facet, 
a whole plane of the same dimension parallel to it.
\label{prop:ext}
\end{prop}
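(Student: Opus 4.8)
\emph{Proof strategy.} The plan is to apply \propref{prop:straddle} not once but at every dyadic scale and then let the scale tend to infinity. The first point to establish is that refinability is invariant under similarities: a homothety of $\R^k$ sends the pair $(T,T^1)$ to another tessellation together with its smaller-scale copy, and it maps indicator functions to indicator functions, so $\Chi(2^nT)$ is refinable for every $n\ge 0$, with smaller-scale copy $2^{n-1}T$. Applying \propref{prop:straddle} at level $n$ then gives that every facet of $2^nT$ is a union of facets of $2^{n-1}T$, and composing these inclusions from level $n$ down to level $0$ shows that every facet of $2^nT$ is a union of facets of $T$.

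Next, fix a facet $F$ of $T$ with affine hull the hyperplane $P=\mathrm{aff}(F)$. For each $n$ the homothetic image $2^nF$ is a facet of $2^nT$, hence by the previous step it is a union of facets of $T$; these facets all lie in the hyperplane $\mathrm{aff}(2^nF)$, which is parallel to $P$, so $\mathrm{aff}(2^nF)$ is a facet-hyperplane of $T$, and the flat piece $2^nF$ inside it has diameter tending to infinity. Since $T$ is shift-invariant and locally finite, only finitely many cosets of facet-hyperplanes parallel to $P$ occur, so infinitely many of the pieces $2^nF$ lie, after a lattice translation, inside one common hyperplane $Q$ parallel to $P$, forming there arbitrarily large flat subsets of the union of facets of $T$. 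Finally, the sublattice of translations preserving $Q$ has rank $k-1$ (again by local finiteness), so as soon as such a flat subset is large enough to contain a fundamental domain of that sublattice, its translates cover all of $Q$; hence $Q$ is a union of facets of $T$, a whole plane parallel to $F$ as claimed.

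The step I expect to be the crux is the opening one: one must be sure that ``refinable'' genuinely survives rescaling --- and that the smaller-scale copy of $2^nT$ is again the next dyadic level --- so that \propref{prop:straddle} may be iterated and the inclusions chained; after that, large flat pieces parallel to $F$ appear in the skeleton of $T$ automatically. The second difficulty is to upgrade ``arbitrarily large flat pieces'' to ``a whole plane'', which is exactly where shift-invariance and the full-rank claim for the stabilising sublattice are used and where a general, non-periodic tessellation would escape the conclusion. If one is free to centre the homotheties at a point of $\mathrm{relint}(F)$, this upgrade is immediate, since then $\bigcup_{n\ge0}2^nF=P$ outright.
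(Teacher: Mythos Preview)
Your approach is essentially the paper's: iterate \propref{prop:straddle} across ever coarser copies of $T$ so that the facet skeleton of $T$ is forced to contain arbitrarily large flat pieces parallel to each facet. The paper's proof is a single sentence and leaves the passage from ``arbitrarily large'' to ``whole plane'' implicit; your closing remark --- centring the homotheties at a point of $\mathrm{relint}(F)$ so that $\bigcup_{n\ge 0}2^nF=\mathrm{aff}(F)$ outright --- is the cleanest way to make that step rigorous and, unlike your main argument, does not lean on shift-invariance, which is only assumed in the subsequent corollary.
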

\begin{proof}
Considering ever coarser tessellations, \propref{prop:straddle}
implies that arbitrarily large extensions of each type of facet
must be a union of tessellation facets of $\Tbase$.
\end{proof}

The lattice structure implies that each such plane is replicated 
at all lattice points.

\begin{corollary}
For $\Chi(T)$ to be refinable,
if $T$ is a lattice, $T$ must contain, for every facet, 
the whole plane of the same dimension that contains it.
\label{cor:ext}
\end{corollary}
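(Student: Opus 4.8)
The plan is to combine \propref{prop:ext} with the translation symmetry of the lattice. By \propref{prop:ext}, refinability of $\Chi(T)$ forces $T$ to contain, for every facet $F$, a full $(k-1)$-dimensional affine plane parallel to $F$, i.e.\ an affine hyperplane $P$ in the same direction as $F$ that is a union of tessellation facets. What remains is to argue that the particular parallel plane one obtains can be taken to be \emph{the} plane $\mathrm{aff}(F)$ spanned by $F$ itself, using that $T$ is a lattice.

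First I would set up notation: let $F$ be a facet of the Voronoi cell $c$, let $V$ be the $(k-1)$-dimensional linear subspace parallel to $F$, and let $P_0 = \mathrm{aff}(F)$ be the affine hyperplane containing $F$. From \propref{prop:ext} we get some affine hyperplane $P = x_0 + V$ that is a union of tessellation facets. Next I would invoke the lattice structure: the tessellation $T$ is invariant under translation by every lattice vector $v \in \Lambda$. Hence for every $v \in \Lambda$ the translated plane $P + v$ is also a union of tessellation facets. The key step is then to show that $P_0$ itself arises as such a translate — equivalently, that the displacement between $P$ and $P_0$, measured transverse to $V$, is a lattice vector modulo $V$. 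Since $F$ is a genuine facet of a Voronoi cell and $P$ consists of facets of the same orientation, both $P$ and $P_0$ lie in the discrete family of parallel facet-hyperplanes of $T$ in direction $V$; because $T$ is a lattice this family is exactly $P_0 + \Lambda/(\Lambda\cap V)$ translated appropriately, so $P_0$ belongs to it. Therefore $\mathrm{aff}(F)$ is a union of tessellation facets, which is the claim.

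The main obstacle I anticipate is the step identifying the family of parallel facet-hyperplanes with a single lattice-coset of one such hyperplane: a priori \propref{prop:ext} only delivers \emph{one} far-away parallel plane, and one must rule out that the facets come in parallel layers at two (or more) incommensurate offsets that never include $\mathrm{aff}(F)$ itself. Here the Voronoi/lattice structure does the work — all cells are lattice translates of the single Voronoi cell $c$, so every facet in direction $V$ is a lattice translate of $F$ or of an antipodal facet of $c$ parallel to $F$, and in either case its supporting hyperplane differs from $P_0$ by a lattice vector (the antipodal facet's supporting hyperplane is $P_0$ reflected through the center, which for a centrally symmetric Voronoi cell is again of the form $P_0 + v$, $v\in\Lambda$). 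Once that layering statement is pinned down, transporting $P$ to $P_0$ by the appropriate lattice translation is immediate, and replicating the resulting plane at all lattice points (the sentence preceding the corollary) finishes the argument.
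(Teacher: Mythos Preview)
Your proposal is correct and follows essentially the same route as the paper: invoke \propref{prop:ext} to obtain a full hyperplane of facets parallel to $F$, then use lattice translation-invariance to carry it onto $\mathrm{aff}(F)$. The paper's own justification is just the single sentence preceding the corollary (``The lattice structure implies that each such plane is replicated at all lattice points''), so your write-up---in particular the care you take to show that every facet-hyperplane in direction $V$ is a lattice translate of $\mathrm{aff}(F)$---supplies detail the paper omits rather than a different argument.
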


By inspection of the three regular tessellations of the plane,
only the Cartesian grid and the uniform triangulation satisfy
\corref{cor:ext}, but not the partition into hexagons. 
\begin{corollary}
Hex splines are not refinable.
\end{corollary}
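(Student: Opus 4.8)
The plan is to reduce the claim to the single fact that the piecewise-constant space $\Chi(T)$ on the hexagonal tessellation $T$ is not refinable, and then to quote \corref{cor:ext}. Recall that hex-splines of degree $n$ are the $(n{+}1)$-fold convolutions of the indicator $H$ of the hexagonal Voronoi cell, shifted over the generating lattice; since $H$ tiles $\R^2$ under those shifts, the shifts of $H$ have pairwise disjoint supports, so the degree-$0$ hex-spline space is exactly $\Chi(T)$. Hence $\Chi(T)$ sits inside the hex-spline family, and the family is refinable only if $\Chi(T)$ is.

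It therefore suffices to check that the hexagonal lattice violates the hypothesis of \corref{cor:ext}. The top-dimensional cells of $T$ are hexagons and their facets are edges; fix any edge $e$ and extend it to the full line $\ell\subset\R^2$ spanned by it. In a hexagonal tiling the edges in each of the three orientation classes are staggered from one row of cells to the next, so $\ell$ leaves the $1$-skeleton of $T$ immediately and runs through the interior of a neighbouring hexagon. Thus $T$ does not contain the whole line through $e$, the hypothesis of \corref{cor:ext} fails, and $\Chi(T)$ is not refinable. Combined with the previous paragraph this shows that no hex-spline space over $T$ is refinable; equivalently, the scaled hex-spline spaces are not nested.

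I expect the only point needing care to be the reduction in the first paragraph. Under the natural reading ``the degree-$n$ hex-spline space at scale $T$ embeds in the degree-$n$ space at scale $T^1$'', it is immediate, because the degree-$0$ space $\Chi(T)$ is one of these spaces. If one worries instead that some higher convolution power might be refinable while $\Chi(T)$ is not, the safest argument is via the Fourier transform: refinability of the degree-$n$ space forces $\widehat H(2\xi)/\widehat H(\xi)$, raised to the power $n{+}1$, to be a bounded periodic mask, so the zeros of $\widehat H$ must already be covered, with multiplicity, by the zeros of $\widehat H(2\cdot)$ --- which is precisely the degree-$0$ condition and is independent of $n$. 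The geometric verification in the second paragraph is the routine ``inspection'' already announced before \corref{cor:ext}.
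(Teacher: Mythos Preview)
Your argument is correct and follows the same route as the paper: the paper's ``proof'' is the single sentence preceding the corollary, namely that inspection of the three regular planar tessellations shows the hexagonal one fails the criterion of \corref{cor:ext}. Your second paragraph supplies exactly that inspection in detail (the line through an edge leaves the $1$-skeleton), and your first paragraph makes explicit the reduction to the degree-$0$ case that the paper leaves implicit.

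One remark on your third paragraph: under the paper's reading---``hex-splines are refinable'' means the family of hex-spline spaces is nested under scaling, for each degree---your first reduction is already the full argument, since degree~$0$ is one of those spaces. The Fourier aside for the alternative reading (some single higher degree might still be refinable) is a reasonable heuristic, but as written it is not quite a proof: from $\widehat H(2\xi)^{n+1}=\widehat m(\xi)\,\widehat H(\xi)^{n+1}$ with $\widehat m$ a trigonometric polynomial you cannot directly conclude that $\widehat H(2\xi)/\widehat H(\xi)$ is itself periodic (taking $(n{+}1)$-th roots of periodic functions need not yield periodic functions). Since the paper does not address this stronger reading either, your main line is in full agreement with it; just be aware that the Fourier sentence would need more work to stand on its own.
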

We can generalize this observation by simplifying the inspection 
criterion.

\def\swid{0.35\linewidth}
\begin{figure}[h]
   \centering
   \psfrag{f1}{$f_1$}
   \psfrag{f2}{$f_2$}
   \psfrag{A}{$\cl$}
   \psfrag{B}{$\clp$}
   \psfrag{sym}{$F_2$}
   \psfrag{int}{$F_1$}
   \includegraphics[width=\swid]{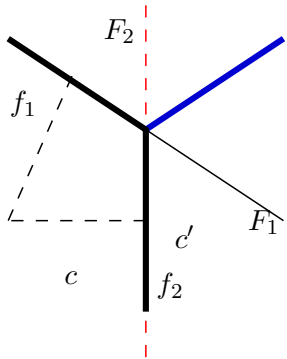}
   \caption{ A pair of abutting facets, whose normals (dashed) have a strictly 
     positive inner product, does not allow for refinability.
   }
   \label{fig:obtuse}
\end{figure}
\begin{prop}
\label{prop:obtuse}
Consider a lattice $\Tbase$ such that the reflection $\clp$
of a cell $\cl$ across one of $\cl$'s facets is again a cell of $\Tbase$.
If abutting cell facets of $\cl$ meet with an obtuse angle
then $\Chi(\Tbase)$ is not refinable.
\end{prop}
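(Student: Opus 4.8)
The plan is to contradict Corollary~\ref{cor:ext}. Assume $\Chi(\Tbase)$ is refinable. Let $f_1$ be the facet across which $\cl$ reflects to the cell $\clp\in\Tbase$, let $f_2$ be a facet of $\cl$ abutting $f_1$, so that (by hypothesis, cf.\ Figure~\ref{fig:obtuse}) $f_1$ and $f_2$ meet at their common codimension-two ridge $R=f_1\cap f_2$ with obtuse interior angle $\theta:=\angle(f_1,f_2)>\pi/2$. Let $P_2$ be the hyperplane spanned by $f_2$. Since $\Tbase$ is a lattice, Corollary~\ref{cor:ext} forces $P_2$ to be a union of tessellation facets of $\Tbase$. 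I would show this is impossible by proving that, continued past $R$, $P_2$ runs into the interior of $\clp$; and a hyperplane meeting $\mathrm{int}(\clp)$ cannot be a union of facets, because $\mathrm{int}(\clp)$ is open, so $P_2\cap\mathrm{int}(\clp)$ is a full-dimensional relatively open piece of $P_2$, while tessellation facets lie on cell boundaries and hence cover none of it.

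To see where $P_2$ goes, I would reduce to the $2$-plane $\Pi$ through a point of $R$ orthogonal to $R$; the whole configuration is a product with $R$, so it suffices to argue in $\Pi$. There $\cl$ is a wedge with apex on $R$, bounded by the rays $f_1\cap\Pi$ and $f_2\cap\Pi$ and of opening angle $\theta$. Reflection across the line $P_1\cap\Pi$ fixes the ray $f_1\cap\Pi$ (shared by $\cl$ and $\clp$) and carries $f_2\cap\Pi$ to the ray of a facet $F_2$ of $\clp$; thus $\clp$ is the wedge of opening $\theta$ bounded by $f_1\cap\Pi$ and $F_2\cap\Pi$, lying on the opposite side of $P_1$ from $\cl$. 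The line $P_2\cap\Pi$ is the ray $f_2\cap\Pi$ together with its opposite ray, and a short angle count shows the opposite ray lies on the $\clp$-side of $P_1$ at angular distance $\pi-\theta$ from the edge $f_1\cap\Pi$ of $\clp$'s wedge and $2\theta-\pi$ from the edge $F_2\cap\Pi$. Both distances are strictly positive precisely because $\theta>\pi/2$, so the opposite ray of $P_2$ lies strictly inside $\clp$, which is the desired contradiction.

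The step to get exactly right is this final angle count, since it is the only place the obtuseness hypothesis enters and the bound is sharp: when $\theta=\pi/2$ (for instance a square or cubical grid) the opposite ray of $P_2$ coincides with the $F_2$-ray, so $P_2$ simply continues flush along the hyperplane of $F_2$ and no contradiction arises — consistent with tensor-product and box splines being refinable. The remaining ingredients — passing to $\Pi$, translating ``obtuse angle'' into ``outward normals have positive inner product'' as drawn in Figure~\ref{fig:obtuse}, and the remark that a hyperplane hitting a cell interior is not a union of facets — are routine and need only be stated.
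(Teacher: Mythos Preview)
Your argument is correct and is essentially the paper's own proof: invoke Corollary~\ref{cor:ext}, extend the hyperplane through one of the two abutting facets past the ridge, and use the obtuse-angle hypothesis to see that this extension enters the interior of the reflected neighbor cell, so it cannot be a union of facets. The only differences are cosmetic---you swap the roles of $f_1$ and $f_2$ relative to the paper and Figure~\ref{fig:obtuse} (the paper reflects across $f_2$ and extends the plane $F_1$ of $f_1$), and you spell out the angle count in the transverse $2$-plane $\Pi$ where the paper just says ``$F_1$ forms an acute angle with $f_2$ and therefore intersects the interior of $\clp$.''
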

\begin{proof}
Let $\cl$ be a cell of $\Tbase$ with facets $f_1$ and $f_2$ that 
join with an obtuse angle (see \figref{fig:obtuse}).
Let $\clp$ be the cell sharing $f_2$. By the reflection symmetry,
$\clp$ also has an obtuse angle between $f_2$ and the mirror image of 
$f_1$ across the plane $F_2$ through $f_2$.
But on the side of $\clp$, $F_1$ forms an accute angle with $f_2$
and therefore intersects the interior of cell $\clp$.
Since cells can not be split, $F_1$ cannot be part of $\Tbase$ and
the claim then follows from \corref{cor:ext}.
\end{proof}

\propref{prop:obtuse} allows us to quickly decide 
which of the (regular crystallographic) root lattices 
$\AL_n$, $\AL^*_n$, 
$\BL_n$, 
$\DL_n$, $\DL^*_n$, 
$\EL_j$, $j=6,7,8$ \cite{conway98}
are suitable for building splines by convolution of
their Voronoi cells.
\begin{corollary}
Splines obtained by convolution of Voronoi cells of 
regular crystallographic root lattices are not refinable,
except for the Cartesian grid and the 
bivariate lattice with triangular Voronoi cells.
\label{cor:voronoi}
\end{corollary}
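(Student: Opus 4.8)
The plan is to verify the corollary by checking each family of root lattices against the criteria already established, primarily \propref{prop:obtuse} together with \corref{cor:ext}. The strategy is entirely case-based: for each lattice type, I would identify the shape of its Voronoi cell, locate a pair of abutting facets, and compute the dihedral angle between them; whenever that angle is obtuse (and the lattice possesses the reflection symmetry required by \propref{prop:obtuse}, which root lattices do, since they are generated by reflections), non-refinability follows immediately. The only cases that can survive are those whose Voronoi cells have all dihedral angles right angles — and among these one still must check \corref{cor:ext} directly, i.e.\ that every facet-plane extends to a full hyperplane of the lattice tessellation.

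The key steps, in order: First I would recall that $\Z^k$ (the Cartesian grid, $\BL_1^k$ or equivalently viewed as the cubic lattice) has a cube as Voronoi cell, all dihedral angles are $\pi/2$, and facet-planes do extend — so it is refinable (indeed its convolution splines are tensor-product B-splines). Second, the bivariate lattice $\AL_2^*$ (equivalently $\AL_2$ up to similarity) has a regular hexagon as Voronoi cell with interior angles $2\pi/3$, which is obtuse, so hex-splines are non-refinable — but wait, the corollary's exception is the lattice with \emph{triangular} Voronoi cells; I would note this is $\AL_2^*$'s dual perspective or rather the lattice whose Voronoi cell is an equilateral triangle, check that this arises among the listed root lattices, and confirm via \corref{cor:ext} that the three facet-lines of the triangular tessellation each extend to full lines of the tessellation. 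Third, for every remaining case — $\AL_n$ and $\AL_n^*$ for $n \geq 3$, $\DL_n$ and $\DL_n^*$ for $n\geq 3$, $\BL_n$ for $n\geq 2$, and $\EL_6, \EL_7, \EL_8$ — I would exhibit one obtuse dihedral angle in the Voronoi cell. For instance, the Voronoi cell of $\AL_3 \cong \DL_3$ (the FCC lattice) is the rhombic dodecahedron, whose dihedral angles are all $2\pi/3$; the Voronoi cell of $\BL_2$ (equivalently the square lattice scaled, or its deep-hole structure) and of $\DL_n^*$ involve non-right dihedral angles; and the Voronoi cells of $\EL_6,\EL_7,\EL_8$ are the well-documented $E_n$-polytopes whose dihedral angles are again $2\pi/3$ at the relevant ridges.

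The main obstacle is bookkeeping rather than conceptual: one must correctly catalog the Voronoi cell of each root lattice and its dihedral angles, relying on the standard references (e.g.\ \cite{conway98}), and be careful about the coincidences among low-dimensional root lattices ($\AL_1 \cong \BL_1 \cong \DL_1$, $\AL_3 \cong \DL_3$, $\BL_2$ vs.\ $\AL_1^2$, $\DL_2 \cong \AL_1^2$, etc.) so that the "exception" list is stated with exactly the right members and nothing is double-counted or wrongly excluded. A secondary subtlety is that \propref{prop:obtuse} requires the reflection of $\cl$ across a facet to again be a lattice cell; for root lattices the Voronoi tessellation is symmetric enough that this holds at every facet, but I would state this explicitly rather than leave it implicit. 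Once the catalog is assembled, each individual verification is a one-line angle computation, and the corollary follows by collecting the cases.
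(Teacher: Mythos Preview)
Your plan is essentially the paper's: a case-by-case sweep of the root lattices, invoking \propref{prop:obtuse} whenever an obtuse dihedral angle is found. The execution differs in one useful way. Rather than cataloguing Voronoi polytopes and quoting their dihedral angles from \cite{conway98}, the paper exploits the fact that the outward normal of a Voronoi facet points toward the corresponding nearest neighbour; hence two abutting facets meet obtusely exactly when the inner product of the two neighbour vectors is strictly positive. The paper then picks, for each lattice, two adjacent short vectors (via an explicit generator matrix for $\AL_n$, $\AL_n^*$, $\DL_n$, $\DL_n^*$, and via standard root vectors for $\EL_6,\EL_7,\EL_8$) and computes that inner product directly. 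This keeps the argument self-contained and uniform, whereas your route is shorter to state but relies on an external catalogue of dihedral angles.

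One bookkeeping slip: you list $\BL_n$ for $n\ge 2$ among the ``remaining cases'' in which you will exhibit an obtuse angle, but $\BL_n$ \emph{is} the Cartesian cubic lattice you already set aside as an exception --- its Voronoi cube has only right dihedral angles (the paper records the inner product as $0$ there). The paper's extra remark about ``adding a diagonal direction'' concerns a different, derived construction, not the Voronoi cell of $\BL_n$ itself. Also, your worry about the ``triangular Voronoi cell'' exception is well placed: lattice Voronoi cells are centrally symmetric, so a triangle cannot arise as one; the paper simply records the negative inner product for the equilateral-triangle tiling without resolving this tension, so do not expect to locate it among the root lattices proper.
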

\begin{proof}
We test whether the Voronoi cells of the root lattices contain 
a pair of faces that meet with an obtuse angle. 
We may assume that one cell center is at the origin and take the 
inner product of the position vectors of two adjacent nearest neighbors,
as identified by their root system;
if the product is strictly positive, the corresponding Voronoi faces 
meet with an obtuse angle.

The $\AL_n$ lattice is traditionally defined via an embedding 
in $\R^{n+1}$, $n>1$. Alternatively,
Theorem 1 of \cite{kim:2010:andual} gives a 
convenient geometric construction in $\R^n$ via the 
$n\times n$ generator matrix $\bA_n := \bI_n + \frac{c_n}{n}\bJ_n$,
where $\bI_n$ is the identity matrix, $\bJ_n$ the $n\times n$
matrix of ones and $c_n := \frac{-1+\sqrt{n+1}}{n}$. 
Denoting the $i$th coordinate vector by $\be_i$,
we choose $\be_1$ and $\be_1+\be_2$ on 
the Cartesian grid, and map them via $\bA_n$ to the 
nearest $\AL_n$ neighbors of the origin. 
The inner product of the images of $\be_1$ and $\be_1+\be_2$ is
\begin{equation*}
   \bA_n\be_1 \cdot \bA_n(\be_1+\be_2) 
   =
   \frac{n+4c_n+c_n^2}{n}
   =
   \frac{2}{n}(n+\sqrt{n+1} - 1) > 0.
\end{equation*}
For the $\AL^*_n$ lattice, the computation is identical except that 
$c_n := \frac{-1+\frac{1}{\sqrt{n+1}}}{n}$.
The inner product is 
$\frac{1}{n(n+1)}(n^2 -2n-2+2\sqrt{n+1})) > 0$.

For the $\DL_n$ lattice, defined in $n\ge 3$ dimensions, 
the generator matrix is
$
   \bD_n :=
   \begin{bmatrix}
   \bI_{n-1} & - \be^{n-1}_{n-1} \\
   -\bj^t_{n-1} & - 1 \\
   \end{bmatrix}
$
(see e.g.\  Section 7 of \cite{Kim:2011:SBS})
and
\begin{equation*}
   \bD_n\be_1 \cdot \bD_n(\be_1+\be_2) 
   = 3 > 0.
\end{equation*}
Since $\bD^{-t}_n$ is the generator of $\DL^*_n$, the inner 
product for $\DL^*_n$ is  $2$.

For $\BL_n$, the Cartesian cube lattice has an inner product $0$
identifying its spline constructions as potentially refinable
(which indeed they are). On the other hand,
splitting each the cube by adding a diagonal direction
\cite{Kim:SBL:2008} yields the inner product 
$\be_1 \cdot \bj = 1$.

For $\EL_6$, we select the root vectors 
$(1,1,0,0,0,0) $ and $(1,1,1,1,1,\sqrt{3})/2$ with inner product $1$.
For $\EL_7$, we select the root vectors 
$(1,1,0,0,0,0,0) $ and $(1,1,1,1,1,1,\sqrt{2})/2$ with inner product $1$.
For $\EL_8$, we select the root vectors 
$(1,1,0,0,0,0,0,0) $ and $\bj_8/2$ with inner product $1$.

Equilateral triangular Voronoi cells in $\R^2$ yield the inner product 
$-\frac{-1}{2}$.

\end{proof}

\section{Overcomplete hex-spline spaces}
\label{sec:overcomplete}
Since the evaluation of hex-splines by convolving 
three families of box splines already leads to a large number of terms, 
it is reasonable to investigate whether redundant superposition
make hex-splines refinable.
To test whether we can build refinable frames, 
let $\{\Tbase_j\}_{j=1..J}$ be a family of tessellations obtained by
shifting $\Tbase$ by less than the lattice spacing
so that their union covers $\R^k$ $J$-fold.
The next example makes this concrete for $J=3$.

\begin{example}
\label{ex:lozenge}
\begin{figure}[!ht]
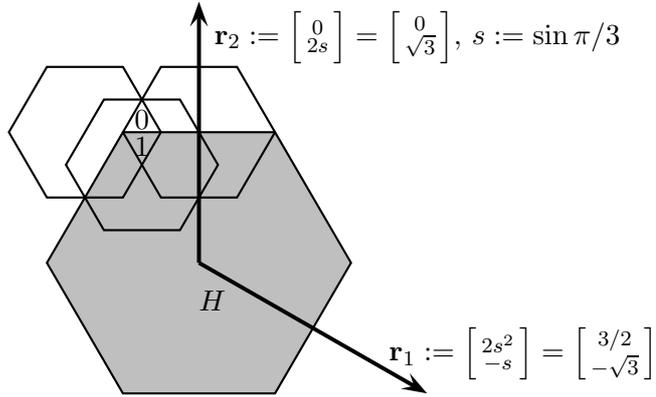

\centering
\psset{unit=1.0cm} \pspicture(-3,-2)(3,3)
\pspolygon[fillstyle=solid,fillcolor=lightgray](2,0)(1,-1.732)(-1,-1.732)(-2,0)(-1,1.732)(1,1.732)
\def\ss{0.8660254040}
\def\s4{0.8660254040}
\psline[linewidth=1.5pt,arrowscale=1.5]{->}(0,0)(0,3.464) 
\psline[linewidth=1.5pt,arrowscale=1.5]{->}(0,0)(3,-1.732) 
\rput[l](0.2,3){$\br_2 := \smt{0}{2s} = \smt{0}{\sqrt{3}}$,\ $s := \sin \pi/3$}
\rput[l](2.5,-1.2){$\br_1 := \smt{2s^2}{-s} = \smt{3/2}{-\sqrt{3}}$}
\rput[l](0,-0.5){$H$}
\pspolygon[fillstyle=none](0.5,2.598)(-0.5,2.598)(-1,1.732)(-0.5,0.866)(0.5,0.866)(1,1.732)
\pspolygon[fillstyle=none](-1.0,2.598)(-2,2.598)(-2.5,1.732)(-2,0.866)(-1,0.866)(-.5,1.732)
\pspolygon[fillstyle=none](-.25,2.165)(-1.25,2.165)(-1.75,1.3)(-1.25,0.433)(-.25,0.433)(0.25,1.299)
\rput(-0.75,1.55){1} \rput(-0.75,1.9){0}
\endpspicture
\caption{ A lozenge-shaped pair of triangles (with markers $0$ and $1$)
is in the common support of 
three half-scaled translated copies
of the grey hexagon and not in the support of other half-scaled hexagons.
Since the pair straddles the boundary of the hexagon,
any linear combination of the three \ifn s needs to be both 1 and 0. 
}
\label{fig:hexref}
\end{figure}
Denote by $\Tbase_3$ a tessellation of the plane into unit-sized hexagons
and by $\Tbase_1$ and $\Tbase_2$ its shifts by $\hlf\br_1$ and $\hlf\br_2$
(see \figref{fig:hexref}).
Let $H(x)$ be the \ifn\ of the unit hexagon of $\Tbase_3$
centered at the origin.
Consider the three \ifn s of 
hexagons of the $\hlf$-scaled tessellations shown in \figref{fig:hexref}.
Since the three hexagons supporting the three functions intersect in a 
pair of triangles, any linear combination of the functions has the same value 
on both triangles. But since the pair straddles the 
boundary of $H(x)$ the value on one must be $0$ and
the value on the other $1$ implying that the joint space is not
refinable.
\end{example}

The example points to a simple extension of \corref{cor:ext}.

\begin{corollary}
Consider a family $\{\Tbase_j\}_{j=1..J}$ of tessellations
each covering $\R^k$.
Then the space $\bigcup_{j=1..J}\Chi(\Tbase_j)$ is refinable only
if the tessellation obtained by intersecting $\{\Tbase_j\}$ 
does not contain a cell straddling a cell facet of any
coarser-scaled copy of any $\Tbase_j$.
\label{cor:family}
\end{corollary}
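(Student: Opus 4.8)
The plan is to combine the reproduction argument of \propref{prop:straddle} with the scaling argument of \propref{prop:ext}, now carried out on the overlay of the whole family; this yields a statement that extends \corref{cor:ext}. Write $\mathcal{I}:=\bigcap_{j=1..J}\Tbase_j$ for the tessellation obtained by intersecting the $\Tbase_j$. By construction each cell $C$ of $\mathcal{I}$ lies in the interior of exactly one cell of every $\Tbase_j$, so every element of $\bigcup_j\Chi(\Tbase_j)$ is identically $0$ or identically $1$ on $C$; hence so is every finite linear combination of such elements, i.e.\ $\mathrm{span}\bigcup_j\Chi(\Tbase_j)$ consists of functions that are constant on each cell of $\mathcal{I}$.

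Next I would propagate refinability to every coarser dyadic scale, exactly as in \propref{prop:ext}. Scaling $\R^k$ by the factor $2$ carries the family $\{\Tbase_j\}$ to $\{2\Tbase_j\}$ and its refinement $\{\hlf\Tbase_j\}$ to $\{\Tbase_j\}$, so the reproduction identities witnessing refinability transport verbatim and give $\bigcup_j\Chi(2\Tbase_j)\subseteq\mathrm{span}\bigcup_j\Chi(\Tbase_j)$; iterating, $\bigcup_j\Chi(2^m\Tbase_j)\subseteq\mathrm{span}\bigcup_j\Chi(\Tbase_j)$ for every $m\ge 1$.

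Now suppose, toward a contradiction, that some cell $C$ of $\mathcal{I}$ straddles a facet $f$ of a coarser-scaled copy $2^m\Tbase_i$ of some $\Tbase_i$; that is, $f$ cuts through the interior of $C$, splitting it into the parts lying in the two $2^m\Tbase_i$-cells abutting $f$. Let $H'\in\Chi(2^m\Tbase_i)$ be the \ifn\ of one of those two cells. Then $H'$ takes both values $0$ and $1$ on $C$, so it is not constant on $C$; yet by the previous paragraph $H'\in\mathrm{span}\bigcup_j\Chi(\Tbase_j)$, a space of functions constant on $C$ --- a contradiction. Hence no cell of $\mathcal{I}$ straddles a facet of any coarser-scaled copy of any $\Tbase_j$, as claimed.

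I expect the only step requiring care to be the scaling argument of the second paragraph: the self-similarity of the construction is what lets a single reproduction identity be reused at every scale, and here it must be applied simultaneously to the $J$ members of the family and of its refinement, rather than to one tessellation as in \propref{prop:ext}. It is also worth recording that, since the $\Tbase_j$ are periodic (lattice) tessellations, their overlay $\mathcal{I}$ is genuinely a locally finite tessellation, so "cell of $\mathcal{I}$" and "interior of one cell of each $\Tbase_j$" are unambiguous; granting that, the remainder is the straddling argument of \exref{ex:lozenge} and \propref{prop:straddle} verbatim.
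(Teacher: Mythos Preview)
Your argument is correct and is precisely the formalization the paper has in mind: the corollary is stated without proof, as the ``simple extension of \corref{cor:ext}'' suggested by \exref{ex:lozenge}, and your combination of the straddling argument of \propref{prop:straddle} with the scaling step of \propref{prop:ext}, applied to the overlay $\mathcal{I}$ of the whole family, is exactly that extension. The only remark is that the paper leaves the scaling direction and the role of $\mathcal{I}$ implicit, whereas you spell them out; nothing substantively different is happening.
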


The main argument concerning straddling cells
applies to more general tessellations than 
shifts of a single tessellation $\Tbase$.

Extending the train of thought, the following \exref{ex:propagate}
shows that families without straddling triangle pairs
need not yield a refinable space of \ifn s either.
 
\begin{figure}[h]
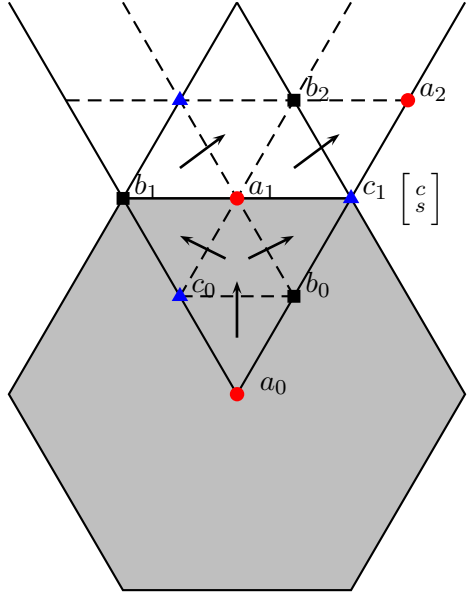

\centering
\psset{unit=1.5cm} \pspicture(-2,-2)(2,4)
\pspolygon[fillstyle=solid,fillcolor=lightgray](2,0)(1,-1.732)(-1,-1.732)(-2,0)(-1,1.732)(1,1.732)
\psline(-1,1.732)(1,1.732) 
\psline(0,3.464)(-1, 1.732)
\psline(2,3.464)(0,0)
\psline(-2,3.464)(0,0) 
\psline(0,3.464)(1,1.732)
\psline[linecolor=black,linestyle=dashed](-1.5,2.598)(1.5,2.598)
\psline[linecolor=black,linestyle=dashed](-0.5,0.866)(0.5,0.866)
\psline[linecolor=black,linestyle=dashed](1.0,3.464)(-0.5,0.866)
\psline[linecolor=black,linestyle=dashed](-1.0,3.464)(0.5,0.866)
\psdots[linecolor=red,dotscale=1.5](0,0)(0,1.732)(1.5,2.598) 
\psdots[linecolor=blue,dotstyle=triangle*,dotscale=1.5](-0.5,0.866)(1,1.732)(-0.5,2.598)
\psdots[linecolor=black,dotstyle=square*,dotscale=1.5](0.5,0.866)(-1,1.732)(0.5,2.598)
\psline[linewidth=1pt,arrowscale=1]{->}(0,0.5)(0,1.0)
\psline[linewidth=1pt,arrowscale=1]{->}(0.1,1.2)(0.5,1.4)
\psline[linewidth=1pt,arrowscale=1]{->}(-0.1,1.2)(-0.5,1.4)
\psline[linewidth=1pt,arrowscale=1]{->}(-0.5,2.0)(-0.1,2.3)
\psline[linewidth=1pt,arrowscale=1]{->}( 0.5,2.0)( 0.9,2.3)
\rput[bl](0.2,0){$a_0$}
\rput[bl](0.6,0.866){$b_0$}
\rput[bl](-0.4,0.866){$c_0$}
\rput[bl](1.1,1.732){$c_1$}
\rput[bl](1.4,1.5){$\smt{c}{s}$}
\rput[bl](0.1,1.732){$a_1$} 
\rput[bl](-0.9,1.732){$b_1$}
\rput[bl](1.6,2.598){$a_2$} \rput[bl](0.6,2.598){$b_2$} 
\endpspicture
\caption{Propagation of values via neighboring triangles
   that share an edge. This yields the contradiction that
   $a_0+b_0+c_0=1$ (center) and $c_1+b_2+a_2=0$ (upper right)
   since the propagation
   implies $a_i=a_0$, $b_i=b_0$ and $c_i=c_0$ for all $i$.
}
\label{fig:propagate}
\end{figure}
\begin{example}
\label{ex:propagate}
Consider shifts 
\begin{equation*}
   H_2(x) :=  H(x-\smt{c}{s}), \quad
   H_3(x) :=  H(x-\smt{-c}{s}), \quad
   c := \cos \frac{\pi}{3},\
   s := \sin \frac{\pi}{3}
\end{equation*}
of the \ifn\  $H(x)$.
The three corresponding tessellations now intersect only in single triangles
so that the scenario of \corref{cor:family} does not apply.
However, an algebraic argument with a simple geometric interpretation
proves lack of refinability.

We want to find scalar, real-valued 
coefficients $a_i$, $b_i$ and $c_i$, $i=(i_1,i_2) \in \Z^2$
such that the following refinement equation holds:
\begin{align*}
    H(x) &= 
    \sum_{i=(i_!,i_2) \in \Z^2}
    a_i H(2x-\gc_i)  + b_i H_2(2x-\gc_i) + c_i H_3(2x-\gc_i), 
    \\
    \gc_i &:= \frac{i_1}{2}\smt{c}{s}+\frac{i_2}{2}\smt{-c}{s}.
\end{align*}
We associate the coefficients with the center of its support hexagon.
Observe then that, when two triangles share an edge and $H(x)$
has the same value $v \in {0,1}$ on both triangles,
then the coefficients at the two non-shared vertices must be equal.
For example $a_0+b_0+c_0=v=a_1+b_0+c_0$ implies $a_0=a_1$.
As indicated by the arrows in \figref{fig:propagate}, the coefficients are 
therefore propagated,
separately inside and outside the support of $H$.
This contradicts the refinement equation in that both
$a_0+b_0+c_0=1$ and $a_2+b_2+c_1= a_0+b_0+c_0=0$ must hold.
\end{example}
So even the two natural extensions to overcomplete spaces of shifted
hex-splines do not afford refinability.

The propagation argument generalizes to face-sharing simplices
in any dimension.
And it generalizes from binary to $m$-ary refinement.

\section{Importance of Refinability}
\label{sec:discuss}
Why do we care about refinability and nestedness of spaces?
Approximation order is well-defined even 
for sequences of spaces that are not nested.
For example, the elegant Fourier-based 
estimates of \cite{van:04} show that hex-splines resulting from
$m$ convolutions have, for low frequencies,
an $L^2$ approximation order of $m$.
But approximation order is concerned with asymptotic estimates.
In practice one is more interested in predicting approximation error.

The following example shows why, for predicting the 
approximation error, nested spaces are highly desirable.

\begin{example}
\label{ex:error}
Let $\sH^i$ be the space of \ifn s over a
regular tessellation by hexagons of diameter $2^{-i}$ 
and such that, at each level of refinement, the origin
is the center of one hexagon.
Denote by $H$ the \ifn\ in $\sH^0$  whose hexagon is
centered at the origin.
Let $f$ be a $C^1$ function obtained by smoothing out $H$,
say by a degree 3 Hermite interpolant,
over a distance of at most $2\eps$ from the boundary of the 
hexagon. 
\def\swid{0.25\linewidth}
\begin{figure}[!ht]
\centering
   \subfigure[natural and man-made hex-tilings]{
   \includegraphics[width=\swid]{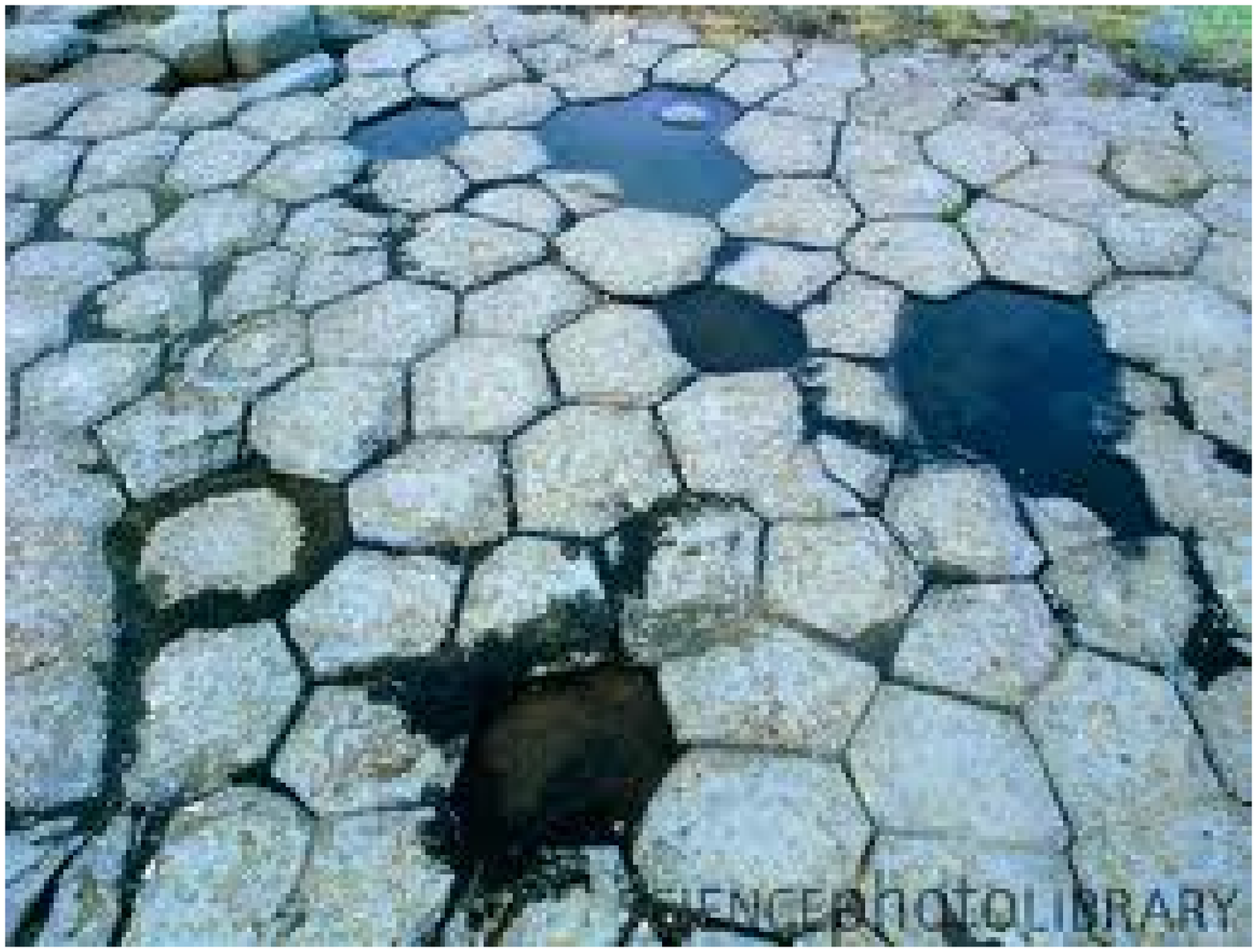}
   \includegraphics[width=\swid]{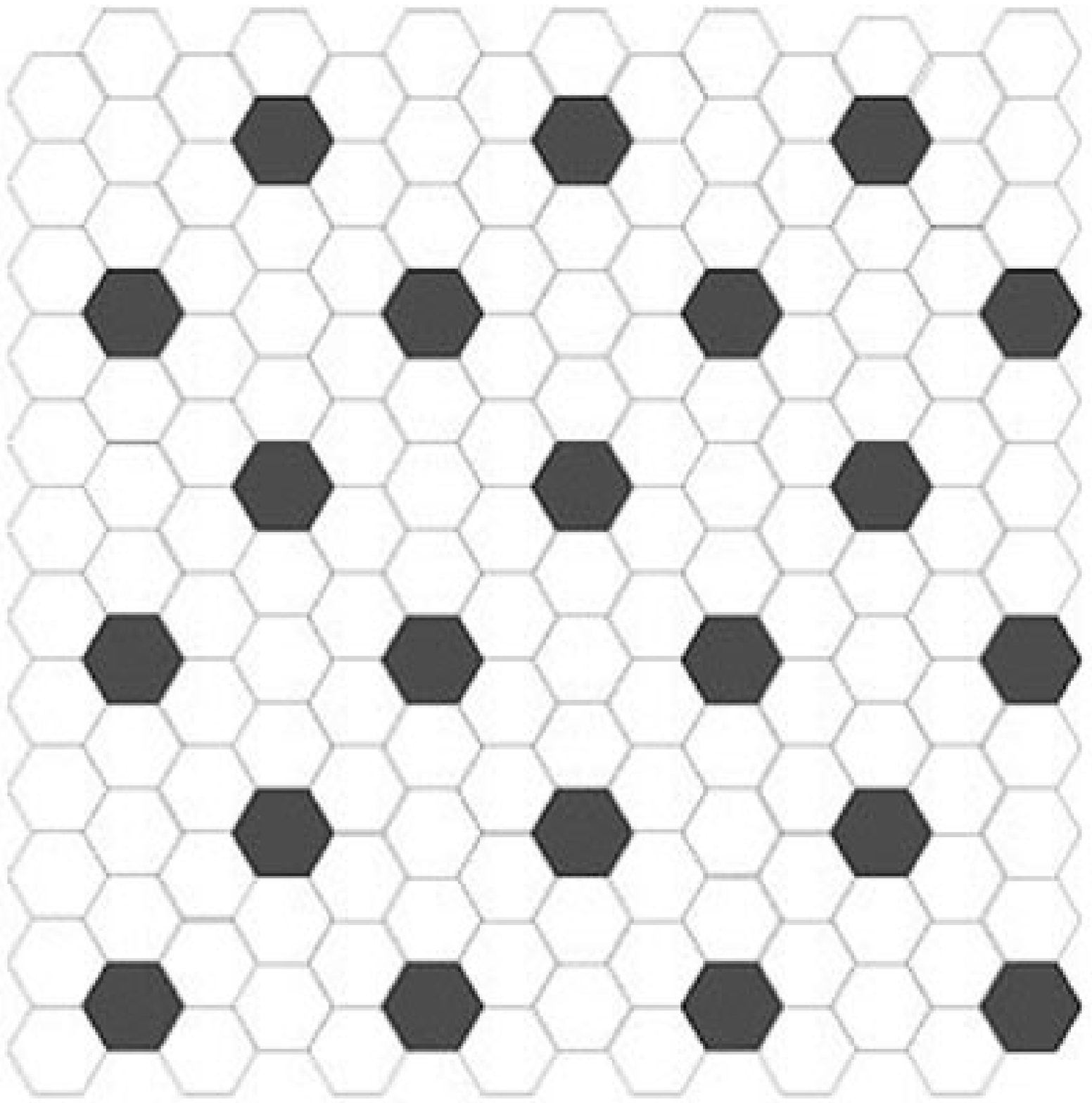}}
   \subfigure[half-scaled hex-tiling]{
\psset{unit=0.6cm} \pspicture(-3,-3)(3,3)
\pspolygon[fillstyle=solid,fillcolor=lightgray]
   (2,0)(1,-1.732)(-1,-1.732)(-2,0)(-1,1.732)(1,1.732)
\rput[l](0,-0.5){$H$}
\pspolygon[fillstyle=none]
   (1,0)(0.5,-0.866)(-0.5,-0.866)(-1,0)(-0.5,0.866)(0.5,0.866)
\pspolygon[fillstyle=none]
   (1,1.732) (0.5,2.598)(-0.5,2.598)(-1,1.732)(-0.5,0.866)(0.5,0.866)
\pspolygon[fillstyle=none]
   (-.5,0.866)(-1,0) (-2,0)(-2.5,0.866)(-2,1.732)(-1,1.732)
\pspolygon[fillstyle=none]
   (-.5,-0.866)(-1,0) (-2,0)(-2.5,-0.866)(-2,-1.732)(-1,-1.732)
\pspolygon[fillstyle=none]
   (1,-1.732) (0.5,-2.598)(-0.5,-2.598)(-1,-1.732)(-0.5,-0.866)(0.5,-0.866)
\pspolygon[fillstyle=none]
   (.5,-0.866)(1,0) (2,0)(2.5,-0.866)(2,-1.732)(1,-1.732)
\pspolygon[fillstyle=none]
   (.5,0.866)(1,0) (2,0)(2.5,0.866)(2,1.732)(1,1.732)
\endpspicture
}
\caption{ Examples for increased approximation error 
on a finer-scale tessellation.
(a) Image credit to Simon Fraser and Fastfloors.com.
(b) Non-nesting of the hex partition.
}
\label{fig:petal}
\end{figure}

Then the $L^2$ approximation error to $f$ from $\sH^0$ is
approximately $2\pi\eps$, the integral over the smoothing region. 
However, since $\sH^1$ does not 
contain a linear combination that can replicate $H$ (see \figref{fig:petal}b),
the $L^2$ approximation error to $f$ from $\sH^1$ is 
approximately $2\pi\hlf >> 2\pi\eps$.
That is the approximation error increases when refining the scale.
\end{example}

\section{Conclusion}
We identified simple necessary criteria for tessellations
to admit a refinable space of (convolutions of) \ifn s.
Lattices, in particular, must contain, for every facet, 
the whole hyperplane containing it.
With \corref{cor:voronoi}
we observed that the increased isotropy of the Voronoi cells of 
non-Cartesian root lattices prevents refinability.
Increased isotropy of the Voronoi cells is however
the main reason for considering non-Cartesian lattices in the first place:
they have higher packing densities leading to more efficient sampling
\cite{IC::PetersenM1962}.
We observed that even overcomplete spaces obtained by natural superposition 
of shifted hexagonal tessellations fail to provide refinable spaces
from convolution of \ifn s.
Finally, and here we omit the details, of the semi-regular tessellations
of the plane, only 3.6.3.6, the hex-tri-tessellation, satisfies 
the criteria of \propref{prop:straddle}; and while its \ifn s 
are refinable, generalizing the construction by convolution fails
to yield a family of higher-order splines sharing all good 
properties of box-splines.

In conclusion, if we want refinable classes of splines, remarkably few options 
exist apart of box splines and tensor-product B-splines.
Conversely, it should be noted, that adjusting and combining
the families of symmetric box-splines on crystallographic root lattices, 
exhibited and analyzed for example \cite{Kim:2011:SBS}, 
does yield splines for any level of smoothness 
that obey the underlying symmetries.

\medskip
{\bf Acknowledgement} 
Zhangjin Huang kindly worked out the first proof of non-refinability 
for \exref{ex:lozenge} and \exref{ex:propagate}, two scenarios
I posed. The current versions shorten his arguments. 

\bibliographystyle{alpha}
\bibliography{p}

\end{document}